\documentclass[11pt]{article}

\usepackage[margin=1.15in]{geometry}
\usepackage{amsmath,amssymb,amsthm,mathtools}
\usepackage{enumitem}
\usepackage[colorlinks=true,linkcolor=blue,citecolor=blue,urlcolor=blue]{hyperref}

\newtheorem{theorem}{Theorem}[section]
\newtheorem{lemma}[theorem]{Lemma}
\newtheorem{proposition}[theorem]{Proposition}

\newtheorem{conjecture}[theorem]{Conjecture}

\theoremstyle{remark}

\title{An Improved Lower Bound for the de Bruijn--Erd\H{o}s Consecutive Gap Problem}

\author{Samuel Korsky}

\date{\today}

\begin{document}

\maketitle

\begin{abstract}
\noindent
Let $(x_n)_{n\geq 1}$ be a sequence of distinct points on the unit circle. After the first $n$ points are inserted, the circle is divided into $n$ intervals. For a fixed integer $r\geq 1$, let $M_n^{(r)}$ and $m_n^{(r)}$ denote respectively the largest and smallest total lengths of $r$ consecutive intervals. A theorem of de Bruijn and Erd\H{o}s gives
\[
    \limsup_{n\to\infty}\frac{M_n^{(r)}}{m_n^{(r)}}\geq 1+\frac1r .
\]
The case $r=1$ is sharp and gives the classical factor $2$. The cases $r\geq 2$ remain much less understood. We prove the improved lower bound
\[
    \limsup_{n\to\infty}\frac{M_n^{(r)}}{m_n^{(r)}}
    \geq
    1+\frac{r}{r^2-1}
    \qquad (r\geq 2).
\]
In particular, for two consecutive intervals the lower bound becomes $5/3$, improving the de Bruijn--Erd\H{o}s bound $3/2$.
\end{abstract}

\section{Introduction}

Let
\[
    x_1,x_2,x_3,\ldots
\]
be a sequence of distinct points on the circle $\mathbb T=\mathbb R/\mathbb Z$. After the first $n$ points are inserted, they divide the circle into $n$ intervals. We write these interval lengths in cyclic order as
\[
    g_1^{(n)},g_2^{(n)},\ldots,g_n^{(n)}.
\]
For a positive integer $r$, define
\[
    M_n^{(r)}
    =
    \max_i\left(g_i^{(n)}+g_{i+1}^{(n)}+\cdots+g_{i+r-1}^{(n)}\right)
\]
and
\[
    m_n^{(r)}
    =
    \min_i\left(g_i^{(n)}+g_{i+1}^{(n)}+\cdots+g_{i+r-1}^{(n)}\right),
\]
where indices are taken cyclically. Thus $M_n^{(r)}$ is the largest length of $r$ consecutive intervals and $m_n^{(r)}$ is the smallest length of $r$ consecutive intervals.

\bigskip
\noindent
The problem goes back to de Bruijn and Erd\H{o}s \cite{debruijn-erdos}. They proved that, for every sequence and every $r\geq 1$,
\[
    \limsup_{n\to\infty}
    \frac{M_n^{(r)}}{m_n^{(r)}}
    \geq
    1+\frac1r .
\]
For $r=1$, this says that the ratio between the largest and smallest interval length must be arbitrarily close to $2$ infinitely often; this is sharp. This case has also recently been revisited from a finite-horizon perspective by DeLeo, Henderschedt, and Wells \cite{deleo-henderschedt-wells}. 

\bigskip
\noindent
For $r\geq 2$, the sharp answer appears to be open. Recent work of Cl\'ement and Steinerberger \cite{clement-steinerberger} studies this problem under the name ``balanced stick breaking" and constructs sequences for which the corresponding ratio is at most
\[
    1+O\left(\frac{\log r}{r}\right).
\]
See also the expository slides of Gniecki \cite{gniecki-slides} for an accessible account of the de Bruijn--Erd\H{o}s problem and related quantities.

\bigskip
\noindent
The purpose of this note is to give a short improved lower bound. We prove the following.

\begin{theorem}\label{thm:main}
For every integer $r\geq 2$ and every sequence of distinct points on $\mathbb T$,
\[
    \limsup_{n\to\infty}
    \frac{M_n^{(r)}}{m_n^{(r)}}
    \geq
    1+\frac{r}{r^2-1}.
\]
\end{theorem}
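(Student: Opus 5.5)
Set $\lambda=1+\frac{r}{r^2-1}$ and argue by contradiction. Suppose there are $\varepsilon>0$ and $N$ with $M_n^{(r)}\le (\lambda-\varepsilon) m_n^{(r)}$ for all $n\ge N$. The plan is to follow how single gaps evolve between two times $n$ and $2n$ (more generally $n$ and $cn$) and to obtain contradictory upper and lower bounds on the total length $1$.

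\emph{Step 1 (scale).} Since the $n$ windows of $r$ consecutive gaps cover the circle exactly $r$ times, we have $m_n^{(r)}\le r/n\le M_n^{(r)}$. Hence every window at time $n$ has length in $[r/(\lambda n),\,\lambda r/n]$, up to $\varepsilon$.

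\emph{Step 2 (surviving gaps).} Fix $n$ and look at time $2n$ (or $n+k$ for a suitable $k$). At least $n - k$ of the gaps present at time $n$ contain no new point by time $n+k$, so they survive unchanged. Consider a maximal run of consecutive surviving gaps at time $n+k$.
\begin{itemize}
\item If the run has length at least $r$, any $r$ consecutive gaps in it form a window at both times. So a window of length $\ge m_n^{(r)}$ at time $n$ must be $\le M_{n+k}^{(r)}$, and similarly in the other direction.
\item Gaps that were split lie in windows at time $n+k$ together with at most $r-1$ neighbors. Their pieces must sum to windows of length at least $m_{n+k}^{(r)}\approx r/(\lambda(n+k))$.
\end{itemize}
This gives inequalities coupling $m_n$, $M_n$, $m_{n+k}$ and $M_{n+k}$.

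\emph{Step 3 (counting).} The key refinement over de Bruijn--Erd\H{o}s, which I expect comes from the $r^2-1=(r-1)(r+1)$ structure, is this. A window at time $n+k$ that contains exactly one newly created point consists of $r$ pieces, but two of those pieces come from the same old gap. So it covers only $r-1$ old gaps plus part of another. Pick $k$ so that the new points are spread out, which forces many such windows. Comparing such a window with an old window of $r+1$ or $r-1$ gaps gives
\[
(r-1)\text{-gap sums}\ \gtrsim\ \frac{r}{\lambda(n+k)},
\qquad
(r+1)\text{-gap sums}\ \lesssim\ \frac{\lambda r}{n}.
\]
Meanwhile every $(r-1)$-sum plus one gap is an $r$-window, and every $(r+1)$-sum is an $r$-window plus one gap. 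Summing all these relations around the circle and averaging over the $n$ starting positions, the factors $r-1$ and $r+1$ should combine into $\lambda\ge 1+r/(r^2-1)$.

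\emph{Step 4 (choice of $k$).} Optimize $k/n$ (probably $k\to n$, i.e.\ doubling, as in the classical $r=1$ argument where the factor 2 arises) and let $\varepsilon\to0$.

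The main obstacle is Step 3. The averaging must be arranged so that the lengths of individual gaps, which are unconstrained, cancel and only window sums remain. I would first try the doubling comparison $n\to 2n$ restricted to the case where every old gap receives at most one new point. Then I would handle the general case by pigeonhole: if some old gap receives two or more points, some other old gap receives none, and a surviving block then yields the needed inequality even more easily.
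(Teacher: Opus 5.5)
There is a genuine gap. Step 3, which you flag yourself as the main obstacle, is where the entire improvement over $1+1/r$ must come from, and nothing in it is derived.

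As written, ``$(r+1)$-gap sums $\lesssim \lambda r/n$'' is unjustified. A run of $r+1$ old gaps is an old window plus one more gap, so it is bounded only by $M_n$ plus a gap. What is true is that $r+1$ \emph{new} gaps containing both pieces of a split gap merge into one old $r$-window, and hence sum to at most $M_n$.

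More fundamentally, summing window inequalities around the circle and averaging over starting points is the first-moment reasoning that already yields $1+1/r$. You give no mechanism by which the unconstrained individual gap lengths cancel to give more. The assertion that $r-1$ and $r+1$ ``should combine'' into $1+r/(r^2-1)$ is read off from the target.

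The pigeonhole fallback in Step 4 also fails. An old gap receiving no new point is one surviving gap, not a surviving $r$-window, so Step 2 does not apply to it.

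The missing idea is dynamical: you must control \emph{where} later points can go, and the paper argues one step at a time. Assume $R_n\le\rho$ eventually, and call a step slow if $M_{n+1}\ge(1-\eta)M_n$.

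After a slow split, each of the $2r$ gaps $h_1,\dots,h_{2r}$ around the new point satisfies $h_j\le\alpha M_n$, where $\alpha=1-(1-\eta)/\rho$. The reason is that $h_j=U-W$, with $U$ a run of $r+1$ new gaps straddling the new point (so $U\le M_n$) and $W$ a new $r$-window (so $W\ge m_{n+1}\ge(1-\eta)M_n/\rho$). Your relation ``an $(r-1)$-run plus one gap is a window'' is essentially this. However, it only has force with $k=1$ and the slow-step hypothesis; across a doubling window, $M_n-m_{2n}$ is far too large.

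The payoff is a frozen block. If some $h_j$ is split later, its two pieces plus $r-2$ neighbours form an $r$-window of length at most $(r-1)\alpha M_n$. So by then $M$ must have fallen to $\beta M_n$, with $\beta=(r-1)(\rho-1+\eta)$.

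Now take an epoch in which $M$ falls from $M_N$ to $\beta M_N$. Apart from $O(1)$ fast steps and splits near them, the initial gaps whose descendants are split are pairwise at cyclic distance at least $r$. Hence the epoch has at most $N/r+O(1)$ steps.

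Iterating, $N$ grows by at most a factor of about $1+1/r$ per epoch while $M$ shrinks by $\beta$. Combined with $M_n\ge r/n$, this forces $\beta\ge r/(r+1)$, i.e.\ $\rho\ge1+r/(r^2-1)$. So $r-1$ enters through the release factor $\beta$ and $r+1$ through the epoch length, not through comparing $(r\pm1)$-gap sums.
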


\smallskip
\noindent
Since
\[
    \frac{r}{r^2-1}
    =
    \frac1r+\frac1{r(r^2-1)},
\]
this strictly improves the de Bruijn--Erd\H{o}s lower bound for every $r\geq 2$. For $r=2$, Theorem \ref{thm:main} gives
\[
    \limsup_{n\to\infty}\frac{M_n^{(2)}}{m_n^{(2)}}
    \geq
    \frac53 .
\]

\smallskip
\noindent
The proof exploits the local nature of the splitting dynamics: inserting a new point only splits one existing interval, so only the consecutive sums near that interval can change. We show that if the ratio $M_n^{(r)}/m_n^{(r)}$ is assumed to stay below a fixed $\rho$, then a split which does not significantly decrease $M_n^{(r)}$ creates a protected block of nearby intervals. None of the intervals in this block may be split again until $M_n^{(r)}$ has decreased by a definite multiplicative factor. Counting these protected blocks over one multiplicative epoch gives the desired obstruction.

\section{Notation and Basic Observations}

Fix $r\geq 2$. Since $r$ remains fixed throughout the proof, we suppress the superscript and write
\[
    M_n=M_n^{(r)},\qquad m_n=m_n^{(r)},\qquad R_n=\frac{M_n}{m_n}.
\]
An $r$-block means a union of $r$ consecutive intervals in the cyclic order.

\bigskip
\noindent
We shall repeatedly use the following elementary observation:

\begin{lemma}\label{lem:M-monotone}
The sequence $M_n$ is nonincreasing.
\end{lemma}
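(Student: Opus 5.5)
We need to show $M_{n+1}\le M_n$. The plan is to compare the $r$-blocks of the configuration at time $n+1$ with those at time $n$ directly.

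Passing from $n$ to $n+1$ inserts the point $x_{n+1}$ into some interval $g_j^{(n)}$, splitting it into two pieces $a,b>0$ with $a+b=g_j^{(n)}$. All other intervals are unchanged. We show that every $r$-block at time $n+1$ has length at most the length of some $r$-block at time $n$. Taking the maximum then gives $M_{n+1}\le M_n$.

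Take an $r$-block $B$ at time $n+1$ and split into two cases.
\begin{itemize}
\item If $B$ contains neither new piece, it consists of $r$ consecutive intervals that are also consecutive at time $n$. So $B$ is literally an $r$-block at time $n$ and has the same length.
\item If $B$ contains at least one of the pieces $a,b$, we replace whatever portion of $g_j^{(n)}$ lies in $B$ by the whole interval $g_j^{(n)}$. This yields a union of at most $r$ consecutive intervals at time $n$. If both pieces were in $B$, they merge into one interval, giving $r-1$ intervals; we then extend by one further neighbouring interval to get a genuine $r$-block at time $n$. In every sub-case the lengths only increase, since $a,b\le g_j^{(n)}$ and interval lengths are nonnegative, so the length of $B$ is at most the length of that $r$-block.
\end{itemize}

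The only care needed is the small-$n$ case where wrap-around matters. When $n\le r$, an $r$-block is understood cyclically, so intervals may repeat and a block can cover the circle. Even then the same substitution argument works, because each occurrence of $a$ or $b$ is replaced by $g_j^{(n)}$. Alternatively one may simply note that the lemma is only needed for large $n$. There is no real obstacle here; the whole point is the monotonicity of lengths under refinement of a partition.
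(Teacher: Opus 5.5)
Your proof is correct and is essentially the paper's argument: each new $r$-block is dominated by an old one, obtained either by replacing a lone piece with the whole gap $g_j^{(n)}$, or by merging both pieces into $g_j^{(n)}$ and extending by one adjacent old gap. You also handle blocks containing neither piece and the small-$n$ wrap-around, which the paper leaves implicit; in the wrap-around case, collapse each adjacent pair $a,b$ into a single copy of $g_j^{(n)}$ and extend the shorter walk, rather than replacing every occurrence of $a$ or $b$ separately, which can overshoot $M_n$.
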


\begin{proof}
Suppose a gap $\ell$ is split into two gaps $x$ and $y$, where $x+y=\ell$. Consider any new $r$-block after the split.

\bigskip
\noindent
If the new block contains $x$ but not $y$, then replacing $x$ by $\ell$ gives an old $r$-block whose length is at least as large. The same applies if the new block contains $y$ but not $x$. If the new block contains both $x$ and $y$, then merging them gives an old block with $r-1$ gaps; extending it by one adjacent old gap gives an old $r$-block whose length is at least as large. Hence every new $r$-block has length at most $M_n$, and therefore $M_{n+1}\leq M_n$.
\end{proof}

\smallskip
\noindent
The average length of an $r$-block is exactly $r/n$, because each gap is counted in exactly $r$ such blocks. Hence
\[
    m_n\leq \frac rn\leq M_n.
\]
In particular, if $R_n\leq \rho$, then
\[
    M_n\leq \rho\cdot\frac rn .        \tag{2.1}\label{eq:M-upper-under-ratio}
\]

\section{A Protected Block Lemma}

The next lemma is the local input. It says that a split which does not reduce $M_n$ much creates a block of intervals that cannot be touched again until $M_n$ has fallen by a definite factor.

\bigskip
\noindent
Throughout this section assume that, from some point onward,
\[
    R_n\leq \rho .
\]
The proof of Theorem \ref{thm:main} will choose $\rho$ below the claimed lower bound and derive a contradiction.

\begin{lemma}[Protected block]\label{lem:protected-block}
Fix $\eta\in(0,1)$ and suppose that the step $n\to n+1$ satisfies
\[
    M_{n+1}\geq (1-\eta)M_n
\]
and
\[
    R_{n+1}\leq \rho.
\]
Suppose the split gap is $\ell=x+y$. After the split, consider the $2r$ consecutive gaps
\[
    h_1,h_2,\ldots,h_{2r},
\]
where
\[
    h_r=x,\qquad h_{r+1}=y,
\]
with $r-1$ gaps to the left of $\ell$ and $r-1$ gaps to the right of $\ell$ included.

\bigskip
\noindent
Set
\[
    q=\frac{1-\eta}{\rho},
    \qquad
    \alpha=1-q=\frac{\rho-1+\eta}{\rho}.
\]
Then
\[
    h_j\leq \alpha M_n
    \qquad (1\leq j\leq 2r).      \tag{3.1}\label{eq:gap-bound}
\]

\smallskip
\noindent
Moreover, suppose that at a later time $T$ one of the gaps $h_j$ is split, and that no gap among
\[
    h_1,\ldots,h_{2r}
\]
has been split before time $T$. If $R_T\leq \rho$, then
\[
    M_T\leq \beta M_n,
    \qquad
    \beta=(r-1)(\rho-1+\eta).       \tag{3.2}\label{eq:release-factor}
\]
\end{lemma}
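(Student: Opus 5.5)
The plan is to use only two facts: the ratio bound $R\le\rho$ at the relevant time, and Lemma~\ref{lem:M-monotone} together with the way a split refines old $r$-blocks. Both (3.1) and (3.2) then come from comparing one well-chosen $r$-block with one well-chosen $(r+1)$-run of gaps. Throughout I assume $n$ is large enough that the $2r$ gaps $h_1,\dots,h_{2r}$ are distinct, e.g.\ $n+1\ge 2r+1$, so that all windows below are genuine runs of consecutive intervals.

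\emph{Step 1: a lower bound for every $r$-block at time $n+1$.} From $R_{n+1}\le\rho$ and $M_{n+1}\ge(1-\eta)M_n$, every $r$-block at time $n+1$ has length at least
\[
m_{n+1}\ge \frac{M_{n+1}}{\rho}\ge \frac{1-\eta}{\rho}M_n=qM_n .
\]

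\emph{Step 2: proof of (3.1).} Any run of $r+1$ consecutive new gaps that contains both $x=h_r$ and $y=h_{r+1}$ is, after merging $x,y$ back into $\ell$, an old $r$-block. Its total length is therefore at most $M_n$.
\begin{itemize}
\item For $1\le j\le r$, take the run $h_j,h_{j+1},\dots,h_{j+r}$. It contains both $h_r$ and $h_{r+1}$, since $j\le r$ and $j+r\ge r+1$.
\item Dropping $h_j$ from this run leaves the new $r$-block $h_{j+1},\dots,h_{j+r}$, whose length is at least $qM_n$ by Step 1.
\item Hence $h_j\le M_n-qM_n=\alpha M_n$.
\item For $r+1\le j\le 2r$, argue symmetrically with the run $h_{j-r},\dots,h_j$, which again contains $h_r,h_{r+1}$, and drop $h_j$ to leave the $r$-block $h_{j-r},\dots,h_{j-1}$.
\end{itemize}
This gives (3.1) for all $1\le j\le 2r$.

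\emph{Step 3: proof of (3.2).} Up to time $T$ none of $h_1,\dots,h_{2r}$ has been split. So just before the split at time $T$ they are still present as $2r$ consecutive intervals with the same lengths, each at most $\alpha M_n$ by (3.1). At time $T$ the gap $h_j$ is split into $u+v=h_j$, producing a run of $2r+1$ consecutive intervals. Inside this run there is a window of $r$ consecutive intervals containing both $u$ and $v$ together with $r-2$ other gaps $h_i$. This works because $r\ge2$ and there are at least $r-1$ of the $h$'s on at least one side of $h_j$ available to fill the window. The length of this $r$-block is
\[
h_j+\sum(\text{$r-2$ other }h_i)\le (r-1)\alpha M_n .
\]
Hence $m_T\le (r-1)\alpha M_n$, and by $R_T\le\rho$,
\[
M_T\le \rho\, m_T\le \rho(r-1)\alpha M_n=(r-1)(\rho-1+\eta)M_n=\beta M_n .
\]

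The main point needing care is Step 2, specifically the bookkeeping that each chosen $(r+1)$-window really contains both halves $x,y$, so that it corresponds to an old $r$-block, for every $j$ including the endpoints $j=1,r,r+1,2r$. The rest is direct substitution.
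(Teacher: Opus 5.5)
Your proof is correct and follows the paper's proof essentially line for line. Step 2 is the same comparison of an $(r+1)$-run containing $x,y$ (at most $M_n$ after merging back into $\ell$) with an $r$-block (at least $m_{n+1}\geq qM_n$), giving $h_j=U_j-W_{j+1}$ for $j\leq r$ and $h_j=U_{j-r}-W_{j-r}$ for $j\geq r+1$. Step 3 uses the same $r$-block formed by the two pieces of $h_j$ and $r-2$ neighbouring marked gaps to get $m_T\leq (r-1)\alpha M_n$; your assumption that $n$ is large enough for the $2r$ gaps to be distinct is left implicit in the paper and is harmless.
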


\smallskip
\begin{proof}
First we prove \eqref{eq:gap-bound}. Since $M_{n+1}\geq (1-\eta)M_n$ and $R_{n+1}\leq\rho$, we have
\[
    m_{n+1}
    \geq
    \frac{M_{n+1}}{\rho}
    \geq
    \frac{1-\eta}{\rho}\cdot M_n
    =
    qM_n .
\]
Therefore every $r$-block after the split has length at least $qM_n$.

\bigskip
\noindent
For $1\leq i\leq r+1$, define the $r$-blocks
\[
    W_i=h_i+h_{i+1}+\cdots+h_{i+r-1}.
\]
Then
\[
    W_i\geq qM_n
    \qquad (1\leq i\leq r+1).       \tag{3.3}\label{eq:W-lower}
\]
For $1\leq i\leq r$, define
\[
    U_i=h_i+h_{i+1}+\cdots+h_{i+r}.
\]
Each $U_i$ contains both $x$ and $y$. If we merge $x+y$ back into the old gap $\ell$, then $U_i$ becomes an old $r$-block. Hence
\[
    U_i\leq M_n
    \qquad (1\leq i\leq r).         \tag{3.4}\label{eq:U-upper}
\]
For $1\leq j\leq r$,
\[
    h_j=U_j-W_{j+1}\leq M_n-qM_n=\alpha M_n.
\]
For $r+1\leq j\leq 2r$,
\[
    h_j=U_{j-r}-W_{j-r}\leq M_n-qM_n=\alpha M_n.
\]
This proves \eqref{eq:gap-bound}.

\bigskip
\noindent
Now suppose $h_j$ is the first gap among the marked gaps to be split at time $T$. Since all other marked gaps are still intact just before time $T$, after the split of $h_j$ there is an $r$-block consisting of the two pieces of $h_j$ together with $r-2$ adjacent marked gaps. Its total length is at most
\[
    (r-1)\alpha M_n.
\]
Thus
\[
    m_T\leq (r-1)\alpha M_n.
\]
If also $R_T\leq\rho$, then
\[
    M_T\leq \rho m_T
    \leq
    \rho(r-1)\alpha M_n
    =
    (r-1)(\rho-1+\eta)M_n.
\]
This is \eqref{eq:release-factor}.
\end{proof}

\smallskip
\noindent
The interpretation is that the marked block created at a slow split (a split that does not decrease $M_n$ substantially) is frozen until $M_n$ has fallen by the multiplicative factor $\beta$.

\section{Counting Protected Blocks}

We now convert Lemma \ref{lem:protected-block} into a global obstruction.

\bigskip
\noindent
Assume that
\[
    R_n\leq \rho
\]
for all sufficiently large $n$. Choose $\eta>0$ and define
\[
    \beta=(r-1)(\rho-1+\eta).
\]
We shall eventually choose $\rho$ and $\eta$ so that
\[
    \beta<\frac{r}{r+1}.          \tag{4.1}\label{eq:beta-small}
\]
Call a step $n\to n+1$ \emph{slow} if
\[
    M_{n+1}\geq (1-\eta)M_n,
\]
and \emph{fast} otherwise.

\bigskip
\noindent
Fix a large time $N$ so that $R_n\leq\rho$ for all $n\geq N$, and let $N^+$ be the first time after $N$ such that
\[
    M_{N^+}\leq \beta M_N.
\]
Such a time exists by \eqref{eq:M-upper-under-ratio}, since $M_n\to0$ under the eventual assumption $R_n\leq\rho$.

\bigskip
\noindent
The following proposition is the main counting step:

\begin{proposition}\label{prop:epoch}
There is a constant $C=C(r,\eta,\rho)$ such that, for all sufficiently large $N$,
\[
    N^+\leq \left(1+\frac1r\right)N+C .
\]
\end{proposition}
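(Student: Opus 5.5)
The plan is to split the steps $N\to N+1,\dots,N^+-1\to N^+$ into fast and slow ones. I would show that the number of fast steps is bounded by a constant, and that the number of slow steps is at most $N/r$ plus a constant. Since every step adds exactly one gap, $N^+-N$ equals the total number of steps, and the two bounds together give the proposition.

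\emph{Fast steps.} By Lemma \ref{lem:M-monotone}, $M_n$ is nonincreasing. Every fast step multiplies $M_n$ by less than $1-\eta$. For $n<N^+$ we have $M_n>\beta M_N$ by the definition of $N^+$. Hence the number $F$ of fast steps in the epoch satisfies
\[
F\leq \frac{\log(1/\beta)}{\log(1/(1-\eta))}+1=:C_0 ,
\]
which depends only on $r,\eta,\rho$.

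\emph{Slow steps never touch frozen gaps inside the epoch.} Let $n$ be a slow step with $N\leq n<N^+$, and let $T$ be any later step with $T<N^+$. Suppose one of the $2r$ marked gaps of $n$ from Lemma \ref{lem:protected-block} were split at time $T$, and take the first such $T$. Since $R\leq\rho$ throughout, \eqref{eq:release-factor} would give
\[
M_T\leq \beta M_n\leq \beta M_N ,
\]
which contradicts $T<N^+$. So throughout the epoch, every slow step freezes the two new pieces $x,y$ together with the $r-1$ current neighbouring gaps on each side.

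In particular, a gap that is split at a slow step is never a piece created by an earlier slow step. It is therefore either one of the $N$ gaps present at time $N$ (call these \emph{original} gaps), or one of the at most $2F$ pieces created at fast steps. The pieces of fast steps account for at most $2C_0$ slow steps. It remains to bound the size of the set $S$ of original gaps that are split at slow steps; each original gap is split at most once, since after that it is no longer a gap.

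\emph{Spacing of $S$ (the main obstacle).} Let $g,g'$ be cyclically consecutive elements of $S$ in the time-$N$ order, and suppose $g$ is split first, at slow step $n$. Then $g'$ must lie outside the $r-1$ frozen neighbours on the side of $g$ facing $g'$. So at time $n$ there are at least $r-1$ current gaps strictly between $g$ and $g'$.

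I would then argue that these intermediate gaps are essentially original. Any original gap lying between $g$ and $g'$ is not in $S$ (because $g,g'$ are consecutive in $S$), so it cannot have been split at a slow step. It could only have been split at a fast step, and at most $F$ original gaps in total are split by fast steps. A piece of such a fast split can then be split further only by fast steps or by slow steps, and the slow splits of fast pieces number at most $2F$ in total. Hence, summed over all consecutive pairs of $S$, the number of non-original current gaps involved is at most $O(F)$.

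It follows that the cyclic distances between consecutive elements of $S$, measured in original gaps, are at least $r$ except for a total defect of $O(C_0)$. This gives
\[
|S|\leq \frac{N}{r}+O(C_0).
\]

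The delicate point, where I would be most careful, is the bookkeeping showing that only $O(F)$ non-original gaps ever sit between consecutive elements of $S$ at the relevant times. This relies on the observation that pieces created at slow steps are themselves frozen, so they cannot be split and generate further gaps.

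\emph{Conclusion.} Combining the bounds,
\[
N^+-N=\#\text{slow}+\#\text{fast}\leq |S|+2C_0+C_0\leq \frac{N}{r}+C ,
\]
which is the claim. Here $N$ is taken large enough that $R_n\leq\rho$ for all $n\geq N$, so that both Lemma \ref{lem:protected-block} and \eqref{eq:M-upper-under-ratio} apply throughout the epoch.
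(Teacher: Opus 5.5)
Your proof is correct and is essentially the paper's argument: bound the fast steps, use Lemma \ref{lem:protected-block} to freeze the block around every slow split, and pack the slowly split initial gaps into the $N$-cycle with spacing $r$ up to $O(1)$ exceptions. The only difference is bookkeeping: you absorb the effect of fast steps as a total spacing defect of $O(F)$ between consecutive elements of $S$, whereas the paper discards initial gaps within distance $r$ of a fast-touched one and proves exact spacing for the rest.

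One small fix is needed. The freezing conclusion only covers splits occurring before the terminal step $N^+-1\to N^+$, and that step may split a frozen gap, since it is where $M$ first drops to $\beta M_N$. So, as the paper does, set the terminal step aside and add $1$ to the count.
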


\begin{proof}
Before time $N^+$, we have
\[
    M_n>\beta M_N .
\]
Consider a slow step $k\to k+1$ with $N\leq k<N^+-1$. It creates a marked block as in Lemma \ref{lem:protected-block}, with $M_k\leq M_N$. By the lemma, none of the marked gaps can be split at any later time $T<N^+$, since otherwise
\[
    M_T\leq \beta M_k\leq \beta M_N,
\]
contradicting the definition of $N^+$. Thus every slow split before the terminal step $N^+-1\to N^+$ creates a protected block that remains untouched until time $N^+$.

\bigskip
\noindent
First we bound the number of fast steps before the terminal step. Suppose there are $b$ fast steps among
\[
    N\to N+1\to\dots\to N^+-2\to N^+-1.
\]
At every fast step $M_n$ is multiplied by a factor less than $1-\eta$, while at all other steps it does not increase. Therefore
\[
    M_{N^+-1}\leq (1-\eta)^b M_N.
\]
But by minimality of $N^+$,
\[
    M_{N^+-1}>\beta M_N.
\]
Consequently
\[
    (1-\eta)^b>\beta,
\]
so
\[
    b\leq B_0:=\left\lceil\frac{\log \beta}{\log(1-\eta)}\right\rceil .
\]
Here $B_0$ depends only on $r,\eta,\rho$.

\bigskip
\noindent
It remains to count the slow steps. We compare them with the $N$ gaps present at time $N$. Let $F$ be the set of initial gaps whose descendants are split during a fast step before the terminal step. Then $|F|\leq B_0$. Call an initial gap ``bad" if its cyclic distance, in the initial $N$-cycle, from some gap in $F$ is at most $r$. There are $O_r(B_0)$ bad initial gaps.

\bigskip
\noindent
The slow steps associated to bad initial gaps contribute only $O_{r,\eta,\rho}(1)$. Indeed, initially there are only $O_r(B_0)$ bad gaps. During the epoch, a fast split can increase by at most one the number of active descendants of bad gaps, and there are at most $B_0$ fast splits. A slow split of an active descendant removes it from further consideration, since its children lie in the protected block created by that slow split and cannot be split again before $N^+$. Thus the number of slow splits associated to bad initial gaps is $O_r(B_0)+B_0=O_{r,\eta,\rho}(1)$.

\bigskip
\noindent
For each remaining slow split, associate it to the unique initial gap whose descendant is split. We claim that these associated initial gaps are pairwise at cyclic distance at least $r$. Suppose not. Then two remaining slow splits are associated to good initial gaps $I$ and $J$ whose cyclic distance is at most $r-1$, allowing $I=J$. Let $A$ be the shorter cyclic arc of initial gaps from $I$ to $J$. Then $A$ contains at most $r$ initial gaps. Since $I$ and $J$ are good, no gap in $A$ has a descendant split during a fast step before the terminal step $N^+-1\to N^+$.

\bigskip
\noindent
Let $k\to k+1$ be the first slow split before the terminal step associated to an initial gap in $A$. Since no gap in $A$ has previously been split by either a fast or a slow step, all gaps of $A$ are still intact immediately before time $k$. At least one of the two chosen slow splits occurs after time $k$; let $J'\in A$ be the initial gap associated to such a later split. At time $k$, the gap $J'$ either is the split gap itself or lies among the $r-1$ gaps to the left or the $r-1$ gaps to the right of the split gap. Hence, after the split at time $k$, the descendant of $J'$ lies inside the protected block created at time $k$. This descendant therefore cannot be split before $N^+$, a contradiction.

\bigskip
\noindent
Thus, apart from $O_{r,\eta,\rho}(1)$ discarded exceptions, the initial gaps associated to slow split locations form a subset of an $N$-cycle with mutual cyclic distance at least $r$. Such a subset has size at most $N/r$. Therefore the number of slow steps before the terminal step is at most
\[
    \frac Nr+O_{r,\eta,\rho}(1).
\]
Adding the bounded number of fast steps and the terminal step $N^+-1\to N^+$ gives
\[
    N^+-N
    \leq
    \frac Nr+O_{r,\eta,\rho}(1),
\]
which proves the proposition.
\end{proof}

\section{Proof of the Main Theorem}

We now prove Theorem \ref{thm:main}.

\begin{proof}
Suppose for the sake of contradiction that
\[
    \limsup_{n\to\infty} R_n
    <
    1+\frac{r}{r^2-1}.
\]
Choose $\rho$ such that
\[
    \limsup_{n\to\infty}R_n<\rho<1+\frac{r}{r^2-1}.
\]
Then $R_n\leq\rho$ for all sufficiently large $n$.

\bigskip
\noindent
By the definition of $\rho$ we may choose $\eta>0$ so small that
\[
    \beta=(r-1)(\rho-1+\eta)<\frac{r}{r+1}.
\]
Starting from a sufficiently large $N_0$, define recursively
\[
    N_{j+1}=N_j^+,
\]
where $N_j^+$ is the first time after $N_j$ for which
\[
    M_{N_j^+}\leq \beta M_{N_j}.
\]
By Proposition \ref{prop:epoch}, there exists $C_0=C(r,\eta,\rho)$ such that
\[
    N_{j+1}\leq \left(1+\frac1r\right)N_j+C_0.
\]
It follows that, for some constant $C_1$,
\[
    N_j\leq C_1\left(1+\frac1r\right)^j .
\]
Therefore, using the average lower bound $M_n\geq r/n$,
\[
    M_{N_j}
    \geq
    \frac{r}{N_j}
    \geq
    C_2\left(\frac{r}{r+1}\right)^j
\]
for some $C_2>0$.

\bigskip
\noindent
On the other hand, by construction,
\[
    M_{N_j}\leq \beta^j M_{N_0}.
\]
Since
\[
    \beta<\frac{r}{r+1},
\]
these two estimates are incompatible for sufficiently large $j$. This contradiction proves
\[
    \limsup_{n\to\infty}R_n
    \geq
    1+\frac{r}{r^2-1}.
\]
\end{proof}

\section{Concluding Remarks}

The argument gives only a small asymptotic improvement over the de Bruijn--Erd\H{o}s lower bound:
\[
    1+\frac{r}{r^2-1}
    =
    1+\frac1r+\frac{1}{r(r^2-1)}.
\]
Thus it does not approach the logarithmic scale appearing in the upper construction of Cl\'ement and Steinerberger \cite{clement-steinerberger}. Nevertheless, the proof suggests that the splitting dynamics contains more information than the original first-moment obstruction.

\bigskip
\noindent
For $r=2$, the theorem gives
\[
    \limsup_{n\to\infty}\frac{M_n^{(2)}}{m_n^{(2)}}\geq \frac53.
\]
Numerical experiments suggest that the sharp value may be larger. In fact, we record the following sharp conjecture:

\smallskip
\begin{conjecture}\label{conj:r2}
For every sequence of distinct points on $\mathbb T$,
\[
    \limsup_{n\to\infty}
    \frac{M_n^{(2)}}{m_n^{(2)}}
    \geq 2.
\]
\end{conjecture}

\end{document}